\documentclass[12pt]{article}
\usepackage[margin = 1in]{geometry}
\usepackage{amsthm,amssymb,amsmath,graphicx,cite, color,mathrsfs,booktabs}
\usepackage{algpseudocode,algorithm,algorithmicx}


\newcommand{\dmax}{\displaystyle\max}

\newcommand{\R}{{\mathbb R}}

\newcommand{\Ren}{\mathcal{R}}

\newcommand{\Grass}{\mathcal{G}}

\renewcommand{\S}{{\mathscr S}}

\newcommand{\D}{{\mathscr D}}

\newcommand{\A}{\mathbf{A}}

\newcommand{\dist}{{\mathrm{dist}}}

\newcommand{\vertii}[1]{{\vert\kern-0.25ex\vert\kern-0.25ex\vert #1     \vert\kern-0.25ex\vert\kern-0.25ex\vert}}
\usepackage{enumitem}

\newcommand{\Diag}{\operatorname{Diag}}

\DeclareMathOperator*{\argmin}{arg\,min}

\newtheorem{theorem}{Theorem}
\newtheorem{proposition}{Proposition}

\newtheorem{corollary}{Corollary}

\def\transp{^{\text{\sf T}}}

\newcommand{\matr}[1]{\begin{bmatrix} #1 \end{bmatrix}}    

\title{Equivalence and invariance of the chi and Hoffman constants of a matrix}
\author{Javier F. Pe\~na\thanks{Tepper School of Business,
Carnegie Mellon University, USA, {\tt jfp@andrew.cmu.edu}} \and Juan C. Vera\thanks{Department of Econometrics and
Operations Research,
Tilburg University, The Netherlands, {\tt j.c.veralizcano@uvt.nl}}
\and Luis F. Zuluaga\thanks{Department of Industrial and Systems Engineering, Lehigh University, USA, {\tt
luis.zuluaga@lehigh.edu}}}

\begin{document}

\maketitle

\begin{abstract}
We show that the following two condition measures of a full column rank matrix $A \in \R^{m\times n}$ are identical:
the chi constant and a {\em signed} Hoffman constant.  This identity is naturally suggested by the evident
invariance of the chi constant under sign changes of the rows of $A$.  We also show that similar equivalence and
invariance properties extend to variants of the chi and Hoffman constants that depend only on the linear subspace
$A(\R^n):=\{Ax: x\in\R^n\} \subseteq \R^m$.  Finally, we show similar identities between the chi constants and
signed versions of Renegar's and Grassmannian condition measures.

\medskip

\noindent{\bf AMS Subject Classification:} 	
65K10, 
65F22, 
90C25  
90C57  
\medskip

\noindent{\bf Keywords:}
Condition measures, invariance, weighted least squares, linear inequalities

\end{abstract}

\section{Introduction}
\label{sec.intro}

We show a novel equivalence between the following two condition measures of a matrix that play central roles in
numerical linear algebra and in convex optimization: the chi measure~\cite{BenTT90,Diki74,Fors96,Stew89,Todd90} and
the Hoffman constant~\cite{Hoff52,guler1995,KlatT95,WangL14}.  We also show some similar equivalences for some
variants of these constants.

Let $A\in \R^{m\times n}$ be a full column rank matrix.  The chi constant $\chi(A)$ and its variant $\overline
\chi(A)$ arise in the analysis of weighted least squares problems~\cite{BobrV01,Fors96,ForsS01,HougV97}.  In
particular, $\overline \chi(A)$
plays a central role in the
analysis of Vavasis and Ye's interior-point algorithm for linear programming~\cite{MontT03,VavaY96}.
A remarkable feature of Vavasis and Ye's algorithm is its sole dependence on the matrix $A$ defining the primal and
dual constraints.

The Hoffman constant $H(A)$ is associated to Hoffman's Lemma~\cite{Hoff52,guler1995}, a fundamental {\em error bound}
for systems of linear constraints of the form $Ax \le b$.  The Hoffman constant and other similar error bounds are
used to establish the convergence rate of a wide variety of optimization algorithms~\cite{WangL14,
BeckS15,Garb18,GutmP19,LacoJ15,LeveL10,LuoT93,NecoNG18,Pang97,PenaR16,WangL14}.

As we discuss in Section~\ref{sec.def},  the chi constant $\chi(A)$ and its variant $\overline\chi(A)$ can be seen as
measures of {\em worst} behavior of a canonical solution mapping for the following weighted least squares problems
\[ 
b \mapsto \argmin_{x\in \R^n} (Ax-b)\transp D(Ax - b)
\] 
where $D$ is a diagonal matrix with positive diagonal entries.

Similarly, the Hoffman constant $H(A)$ and its variant $\overline H(A)$ can be seen as measures of {\em worst}
behavior of a canonical solution mapping for the following system of linear inequalities
\[
b\mapsto \{x\in\R^n: Ax \le b\}.
\]

It is not immediately obvious that there should be a relationship between the chi and Hoffman constants.  Nonetheless,
it is known that $H(A)\le \chi(A)$ and that $\chi(A)$ can be arbitrarily larger~\cite{HoT02,PenaVZ19}.   Thus an equivalence
between the constants $H(A)$ and $\chi(A)$ appears impossible.   The main goal of  this paper is to show that this
apparent impossibility can be attributed to and rectified via a canonical {\em sign invariance property} of $\chi(A)$
detailed in equation~\eqref{eq.sign.inv} below.  
Namely, the constant $\chi(A)$ does not change when the signs of some of the rows of $A$ are flipped as the solution
mapping~\eqref{eq.wls} satisfies this sign invariance property.  On the other hand, the constant $H(A)$ does
not satisfy this sign invariance property and thus $H(A)$ and $\chi(A)$ cannot be identical. 
 Our main result (Theorem~\ref{thm.main}) shows that  $\chi(A)$ and
$H(A)$ become identical after properly tweaking $H(A)$ to ensure the sign invariance property.  

A similar type of
invariance consideration yields identities between the
variants $\overline \chi(A)$ and $\overline H(A)$.   Our developments can be further extended to obtain analogous
identities between the four measures $\chi(A), \overline \chi(A), H(A), \overline H(A)$ and the following two popular
condition measures for systems of linear inequalities: Renegar's distance to ill-posedness $\Ren(A)$~\cite{Rene95a}
and the Grassmannian condition measure $\Grass(A)$~\cite{AmelB12}.

The above developments are similar in spirit to  results previously derived by Tun\c{c}el~\cite{Tunc99}, by Todd,
Tun\c{c}el, and Ye~\cite{ToddTY01}, and by
Ho and Tun\c{c}el~\cite{HoT02}.  These articles compare various condition measures for linear programming including
the chi and Hoffman constants.  However, there are two major differences between our developments and theirs.  First,
most of the results in~\cite{Tunc99,HoT02,ToddTY01} provide only inequalities and hence are weaker than our identities
concerning the chi and Hoffman constants.  Second, the articles~\cite{Tunc99,HoT02,ToddTY01} do not deal with
Renegar's and Grassmannian condition measures but instead relate the chi and Hoffman constants with Ye's condition
measure~\cite{Ye94} for polyhedra of the form $\{A\transp y: y\ge 0, \|y\|_1 = 1\}$.  Hence we deliberately chose not
to discuss Ye's condition measure in this paper.  However, we note that our results can be extended to
identities involving Ye's condition measure by drawing on the recent work by Pe\~na and Roshchina~\cite{PenaR20}.


To formally state the sign invariance property, we rely on the following convenient notation.  Let $\S \subseteq
\R^{m\times m}$ denote the set of {\em signature} matrices defined as follows
\begin{equation}\label{eq.signature}
\S:=\{\Diag(s): s \in \{-1,1\}^m\}.
\end{equation}
The constant $\chi(A)$ satisfies the following {\em sign invariance property:}
\begin{equation}\label{eq.sign.inv}
\chi(A) = \chi(SA) \text{ for all } S\in \S.
\end{equation}
Our main result states that $\chi(A)$ and $H(A)$ become identical if we take a suitable {\em closure} of $H(A)$ to
ensure the sign invariance property.
\begin{theorem}\label{thm.main}
Let $A\in \R^{m\times n}$ be a full column-rank matrix.  Then
\begin{equation}\label{eq.thm}
\chi(A) = \max_{S\in\S} H(SA).
\end{equation}
\end{theorem}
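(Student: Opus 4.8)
\emph{Proof strategy.} The inequality $\chi(A)\ge\max_{S\in\S}H(SA)$ is the easy half: for each $S\in\S$ the known bound $H(SA)\le\chi(SA)$ (see~\cite{HoT02,PenaVZ19}), combined with the sign invariance~\eqref{eq.sign.inv}, gives $H(SA)\le\chi(SA)=\chi(A)$, and since $\S$ is finite the maximum over $S\in\S$ obeys the same bound. For the reverse inequality $\chi(A)\le\max_{S\in\S}H(SA)$ it suffices, again by finiteness of $\S$, to produce for each $\varepsilon>0$ a signature $S\in\S$ with $H(SA)\ge\chi(A)-\varepsilon$. I would start from the description of $\chi(A)$ as the supremum over positive definite diagonal matrices $D$ of $\|(A\transp D A)^{-1}A\transp D\|$ recalled in Section~\ref{sec.def}: fix such a $D$ and a unit vector $u$ for which $\|x\|\ge\chi(A)-\varepsilon$, where $x:=(A\transp D A)^{-1}A\transp D u$, and set $r:=Ax-u$, so that the weighted least squares optimality condition reads $A\transp D r=0$. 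The key observation is that the sign pattern of the residual $r$ prescribes the correct orientation of the rows of $A$: put $S:=\Diag(\sigma)\in\S$ with $\sigma_i=\operatorname{sign}(r_i)$ for $r_i\ne0$ (the coordinates with $r_i=0$ are treated below). Then $\mu:=SDr$ is nonnegative, its $i$-th entry being $d_i|r_i|\ge0$, and $(SA)\transp\mu=A\transp D r=0$; thus $\mu$ is a nonnegative recession certificate for $SA$, supported exactly on the ``active'' rows $\{i:r_i\ne0\}$.

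With $S$ in hand, the remaining task is to turn $(D,u,x,r)$ into a right-hand side $b\in\R^m$ and a query point $\hat x\in\R^n$ for the system $SAy\le b$ for which (i) $\{y:SAy\le b\}\ne\varnothing$ — so that Hoffman's lemma gives $\dist(\hat x,\{y:SAy\le b\})\le H(SA)\,\|(SA\hat x-b)_+\|$ — and (ii) $\dist(\hat x,\{y:SAy\le b\})\ge(\chi(A)-\varepsilon)\,\|(SA\hat x-b)_+\|$, which together force $H(SA)\ge\chi(A)-\varepsilon$. The natural attempt is to make the rows indexed by $\{i:r_i\ne0\}$ active at $x$ with the rest slack (taking, say, $b=SAx$ on the active block), choose $\hat x$ on the ray through $x$ realizing the minimum-norm point of the feasible set, and verify (ii) through the LP-duality identity $\dist(\hat x,\{y:SAy\le b\})=\max\{\langle\eta,A\hat x-Sb\rangle:\|A\transp\eta\|\le1,\ S_{ii}\eta_i\ge0\}$ with a dual vector $\eta$ assembled from $D$ and $r$. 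Alternatively — and probably more cleanly — one can bypass this construction by pairing the combinatorial characterization of $H(\cdot)$ from~\cite{PenaVZ19} with a matching characterization of $\chi(\cdot)$: each is the maximum of one and the same quantity over a family of index configurations, the family for $\chi(A)$ being sign-agnostic while the family for $H(SA)$ is cut out by the orientation $S$, so the reverse inequality reduces to showing that every configuration relevant to $\chi(A)$ lies in the $H(SA)$-family for some $S\in\S$.

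The step I expect to be hardest is exactly this last point — exhibiting a signature under which the associated linear-inequality system is feasible and the Hoffman bound is tight. Feasibility is the delicate part: flipping signs of rows reverses inequalities, so the orientation must make the whole system $SAy\le b$ solvable, not merely its active part, and that the right orientation is the sign pattern of $r$ rather than an arbitrary signature is precisely what the nonnegative recession certificate $\mu=SDr$ with $(SA)\transp\mu=0$ encodes. Tightness of the bound is where the optimality condition $A\transp D r=0$ gets spent: it makes the projection (minimum-norm) direction of the polyhedron align with $x$, pushing the ratio in (ii) up to $\|x\|$. Finally, the coordinates with $r_i=0$ need a short extra argument: either perturb $D$ (and, if necessary, $u$) to a nearby pair whose value is still within $\varepsilon$ of $\chi(A)$ and for which $r$ has full support, or choose $\sigma_i$ on those indices so that the corresponding rows stay slack in the constructed instance; in either case the finiteness of $\S$ upgrades the resulting $\varepsilon$-estimate to the exact identity~\eqref{eq.thm}.
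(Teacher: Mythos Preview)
Your ``alternative'' route---pairing the combinatorial characterizations of $\chi$ and $H$---is exactly what the paper does, and it is far shorter than your primary route suggests. The paper uses
\[
\chi(A)=\max_{J:\,A_J\text{ nonsingular}}\ \max_{v\in\R^J,\ \|A_J\transp v\|=1}\|v\|,
\qquad
H(A)=\max_{J:\,A_J\text{ nonsingular}}\ \max_{v\in\R^J_+,\ \|A_J\transp v\|=1}\|v\|
\]
(Propositions~\ref{prop.chi} and~\ref{prop.H}). The outer index families are identical; only the sign constraint on the inner variable $v$ differs. Taking an optimal pair $(\hat J,\hat v)$ for $\chi(A)$ and setting $\hat S_{ii}=\operatorname{sign}(\hat v_i)$ for $i\in\hat J$ (arbitrary off $\hat J$) gives $u:=\hat S_{\hat J}\hat v\in\R^{\hat J}_+$ with $\|(\hat SA)_{\hat J}\transp u\|=\|A_{\hat J}\transp\hat v\|=1$ and $\|u\|=\|\hat v\|=\chi(A)$, hence $H(\hat SA)\ge\chi(A)$. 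No $\varepsilon$-approximation, no feasibility verification, and no perturbation for zero coordinates is needed (entries with $\hat v_i=0$ may receive either sign). That is the entire reverse inequality.

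Your primary weighted-least-squares route is genuinely different and, as you yourself flag, incomplete at the decisive step. The certificate $\mu=SDr$ you construct satisfies $(SA)\transp\mu=0$, not $\|(SA)\transp\mu\|=1$, so it does not plug into the dual description of $H$; on the primal side you still owe a feasible instance $(b,\hat x)$ with tight Hoffman ratio, which you leave open. More basically, the sign pattern of the residual $r=Ax-u$ is not the same object as the sign pattern of the dual witness $\hat v$ in Proposition~\ref{prop.chi}, and connecting the two would essentially amount to re-deriving that proposition from the least-squares definition. The combinatorial alternative you sketch is both correct and the paper's actual argument; lead with it and drop the least-squares detour.
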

A similar type of invariance property relates the measures $\chi(A)$ and $\overline \chi(A)$.  The construction of
$\overline \chi(A)$ depends only on the subspace $A(\R^n)$.  Thus $\overline \chi(A)$ readily
satisfies the following invariance  under right multiplication by non-singular matrices
\[
\overline\chi(A) = \overline\chi(AR) \text{ for all } R\in \R^{n\times n} \text{ non-singular}.
\]
In analogy to Theorem~\ref{thm.main}, the measures $\chi(A)$ and $\overline\chi(A)$ become identical if we take a
suitable closure of $\chi(A)$ to ensure the same invariance under right multiplication by non-singular matrices (see
Proposition~\ref{prop.chi.invar}):
\[
\overline \chi(A) = \min_{R \in \R^{m\times m} \atop \text{ non-singular}} \|AR\| \cdot \chi(AR).
\]
Furthermore, the same kind of identity holds for the measures $H(A)$ and $\overline H(A)$ (see
Proposition~\ref{prop.H.invar}):
\[
\overline H(A) = \min_{R \in \R^{m\times m} \atop \text{ non-singular}} \|AR\| \cdot H(AR).
\]
In particular, identity~\eqref{eq.thm} in Theorem~\ref{thm.main} readily extends to the measures $\overline \chi(A)$
and $\overline H(A)$ as follows (see Corollary~\ref{cor}):
\begin{equation}\label{eq.thm.bar}
\overline \chi(A) = \max_{S\in\S} \overline H(SA).
\end{equation}
Our proof of Theorem~\ref{thm.main} will actually show the following stronger identity when all rows of $A$ are
non-zero (see Theorem~\ref{thm.main.strong}):
\[
\chi(A) = \max_{S\in\S\atop SA(\R^n) \cap \R^m_{++} \ne \emptyset} H(SA).
\]
This stronger identity in turn yields some interesting connections with Renegar's distance to ill-posedness
$\Ren(A)$~\cite{Rene95a,Rene95b} and the Grassmannian condition number of $\Grass(A)$~\cite{AmelB12}.  More precisely,
in Section~\ref{sec.renegar} we show the following identity analogous to~\eqref{eq.thm} (see
Proposition~\ref{prop.chi.ren}):
\begin{equation}\label{eq.chi.ren.0}
\chi(A) = \max_{S\in\S\atop SA(\R^n) \cap \R^m_{++} \ne \emptyset} \frac{1}{\Ren(SA)}
\end{equation}
and the following identity analogous to~\eqref{eq.thm.bar}  (see Corollary~\ref{corol.chi.grass}):
\begin{equation}\label{eq.chi.grass.0}
\overline\chi(A) = \max_{S\in\S\atop SA(\R^n) \cap \R^m_{++} \ne \emptyset}\Grass(SA).
\end{equation}

The main sections of the paper are organized as follows.  Section~\ref{sec.def} recalls the construction of the chi
constants $\chi(A), \overline \chi(A)$ as well as the Hoffman constants $H(A), \overline H(A)$ and some of their main
properties.  Our presentation deliberately follows separate but similar formats for $\chi(A), \overline \chi(A)$ and
for $H(A), \overline H(A)$.  Section~\ref{sec.proof} presents the proof of Theorem~\ref{thm.main}.  To do so, we state
and prove the stronger Theorem~\ref{thm.main.strong}.  Finally, Section~\ref{sec.renegar} recalls the construction of
Renegar's condition measure $\Ren(A)$ and of the Grassmannian condition measure $\Grass(A)$.  This section also
proves identities~\eqref{eq.chi.ren.0} and~\eqref{eq.chi.grass.0}.

Throughout the paper whenever we encounter an Euclidean space $\R^d$ we implicitly assume that it is endowed with the
Euclidean norm defined by the canonical inner product in $\R^d$, that is, $\|u\|:=\sqrt{u\transp u}$ for all $u\in
\R^d$.  Likewise, whenever we encounter a space of matrices $\R^{p\times d}$ we implicitly assume that it is endowed
with the operator norm, that is,
\[
\|A\| = \max_{x\in \R^d\atop \|x\|\le 1} \|Ax\|
\]
for all $A\in \R^{p\times d}$.

\section{Definition and properties of the chi and Hoffman constants}
\label{sec.def}

This section recalls the construction and main properties of the constants $\chi(A), \overline \chi(A)$ and
$H(A),\overline H(A)$.  These constants can be seen as condition measures for two fundamental problems in scientific
computing, namely {\em weighted least squares} and {\em linear inequalities.}

\subsection{Weighted least squares}

Let $\D\subseteq \R^{m\times m}$ denote the set of diagonal matrices in $\R^{m\times m}$ with positive diagonal
entries.  That is,
\[
\D:=\{\Diag(d): d\in\R^m_{++}\},
\]
where $\R^m_{++} \subseteq \R^m$ denotes the set of vectors in $\R^m$ with positive entries.

Suppose $A\in \R^{m\times n}$. Given $D \in \D$, consider the weighted least squares problem
\begin{equation}\label{eq.wls}
\min_{x\in \R^n} \; (Ax - b)\transp D (Ax-b).
\end{equation}
When $A$ is full column-rank, it is easy to see that the solution to~\eqref{eq.wls} is precisely $A_D^{\dagger}b$
where $A_D^\dagger$ is the following weighted pseudo-inverse of $A$~\cite{Fors96,Stew89}:
\begin{equation}\label{eq.wpseudo}
A_D^{\dagger} = (A\transp D A)^{-1} A\transp D.
\end{equation}
\subsubsection{Condition measures $\chi(A)$ and $\overline \chi(A)$}
Suppose $A\in \R^{m\times n}$ is full column-rank. The condition measure $\chi(A)$ is defined as the following
worst-case characteristic of the family of solution mappings $A^{\dagger}_D: \R^m\rightarrow \R^n$ constructed
via~\eqref{eq.wpseudo}:
\begin{equation}\label{eq.def.chi}
\chi(A):=\max_{D\in\D} \|A^{\dagger}_D\|.
\end{equation}
Consider the following alternative reformulation of the weighted least-squares problem~\eqref{eq.wls} in the subspace
$A(\R^n)$:
\begin{equation}\label{eq.wls.image}
\min_{y\in A(\R^n)} \; (y - b)\transp D (y-b).
\end{equation}
The solution to~\eqref{eq.wls.image} is evidently the $D$-projection of $b$ onto $A(\R^n)$.  Once again, it is easy to
see that if $A$ is full column-rank then the $D$-projection  onto $A(\R^n)$ is
\[
A(A\transp D A)^{-1} A\transp D = AA_D^{\dagger}.
\]
The condition measure $\overline\chi(A)$ is defined as the
following worst-case characteristic of the family of solution mappings
$AA_D^{\dagger}: \R^m\rightarrow A(\R^n)$:
\begin{equation}\label{eq.def.barchi}
\overline\chi(A):=\max_{D\in\D} \|AA_D^{\dagger}\| = \max_{D\in\D} \|A(A\transp D A)^{-1} A\transp D\|.
\end{equation}
Although it is not immediately evident, the constants $\chi(A)$ and $\overline \chi(A)$ are finite for any full-rank matrix $A\in
\R^{m\times n}$.  This fact was independently shown by Ben-Tal and Teboulle~\cite{BenTT90},
Dikin~\cite{Diki74}, Stewart~\cite{Stew89}, and Todd~\cite{Todd90}.  The constants $\chi(A)$ and $\overline \chi(A)$
arise in and
play a key role in weighted least-squares problems~\cite{Fors96,ForsS01,BobrV01} and in linear
programming~\cite{HoT02,ToddTY01,Tunc99,VavaY96}.

We record some alternative expressions for $\chi(A)$ and $\overline \chi(A)$ that are closely related to the
constructions of $H(A)$ and $\overline H(A)$ discussed below.  First, observe that
\[
\chi(A)= \max_{D\in \D} \max_{b\in \R^m, x\in \R^n\atop Ax \ne b} \frac{\|x-A^{\dagger}_D(b)\|}{\|Ax-b\|}.
\]
Second, observe that
\[
\overline\chi(A)= \max_{D\in \D} \max_{b\in \R^m, y\in A(\R^n)\atop y \ne b} \frac{\|y-AA_D^{\dagger}(b)\|}{\|y-b\|}.
\]
\subsubsection{Properties of $\chi(A)$ and $\overline \chi(A)$}

Suppose $A\in \R^{m\times n}$ is full column-rank and $D\in \D$.  By construction, the solution mappings
$A_D^{\dagger}$ and $AA_D^{\dagger}$ satisfy the following property: For  $S\in \S$ then
$(SA)_D^\dagger = A_D^\dagger S$.  In particular $\|(SA)_D^\dagger\| =\|A_D^{\dagger}\|$ and $\|(SA)(SA)_D^\dagger\| = \|AA_D^\dagger\|$.  Therefore~\eqref{eq.def.chi} and~\eqref{eq.def.barchi} imply that the
constants $\chi(A)$ and $\overline \chi(A)$ satisfy the following sign invariance property: 
\[
\chi(A) = \chi(SA) \text{ and } \overline\chi(A) = \overline\chi(SA), \text{ for all }S \in \S.
\]
Furthermore, the quantity $\overline\chi(A)$ depends only on the subspace $A(\R^n)$ which evidently satisfies
$A(\R^n) = AR(\R^n)$ for all non-singular $R\in \R^{n\times n}$.
Therefore, the constant $\overline \chi(A)$ is invariant under multiplication by non-singular matrices, that is, 
\begin{equation}\label{eq.chi.right.invar}
\overline \chi(A) = \overline \chi(AR), \text{ for all non-singular }R\in \R^{n\times
n}.
\end{equation}
The constant $ \chi(A)$ is not invariant under multiplication by singular matrices. Proposition~\ref{prop.chi.invar} shows that $\overline\chi(A)$ is the closure of $ \chi(A)$ under this kind of invariance.

\begin{proposition}\label{prop.chi.invar}
Suppose $A\in \R^{m\times n}$ is full column-rank.  Then $\overline \chi(A) \le \|A\| \cdot \chi(A)$ and $\overline
\chi(A) = \chi(A)$ when the columns of $A$ are orthonormal.  In particular,
\begin{equation}\label{eq.chi.invar}
\overline \chi(A) = \min_{R \in \R^{n\times n} \atop \text{ non-singular}} \|AR\| \cdot \chi(AR).
\end{equation}
\end{proposition}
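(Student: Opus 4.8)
The plan is to establish the two stated comparisons and then combine them to get the minimization formula~\eqref{eq.chi.invar}. First I would prove $\overline\chi(A) \le \|A\|\cdot\chi(A)$. Starting from the definitions~\eqref{eq.def.chi} and~\eqref{eq.def.barchi}, for any $D\in\D$ we have $\|AA_D^\dagger\| \le \|A\|\cdot\|A_D^\dagger\| \le \|A\|\cdot\chi(A)$ by submultiplicativity of the operator norm; taking the maximum over $D\in\D$ on the left gives the inequality. Next I would show that $\overline\chi(A) = \chi(A)$ when the columns of $A$ are orthonormal. In that case $A\transp A = \Id$, and more importantly $\|Ax\| = \|x\|$ for all $x\in\R^n$, so $A$ is an isometry onto $A(\R^n)$. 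Using the second alternative expression for $\chi(A)$, namely $\chi(A) = \max_{D\in\D}\max_{Ax\ne b} \|x - A_D^\dagger(b)\|/\|Ax-b\|$, the substitution $y = Ax$ identifies the numerator $\|x - A_D^\dagger(b)\|$ with $\|Ax - AA_D^\dagger(b)\| = \|y - AA_D^\dagger(b)\|$ because $A$ preserves norms; matching this against the alternative expression for $\overline\chi(A)$ yields equality. (One should check that ranging $x$ over $\R^n$ and ranging $y$ over $A(\R^n)$ give the same supremum, which is immediate since $A$ is a bijection from $\R^n$ onto $A(\R^n)$.)

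With these two facts in hand, I would derive~\eqref{eq.chi.invar}. For the inequality $\overline\chi(A) \le \min_R \|AR\|\cdot\chi(AR)$: fix any non-singular $R\in\R^{n\times n}$; then $\overline\chi(A) = \overline\chi(AR)$ by~\eqref{eq.chi.right.invar}, and $\overline\chi(AR) \le \|AR\|\cdot\chi(AR)$ by the first part, so $\overline\chi(A) \le \|AR\|\cdot\chi(AR)$ for every such $R$, hence for the minimum. For the reverse inequality I need to exhibit a particular $R$ achieving equality. The natural choice is to take $R$ so that $AR$ has orthonormal columns — concretely, using a QR-type factorization $A = QR_0$ with $Q$ having orthonormal columns and $R_0$ upper-triangular non-singular (here $A$ full column-rank guarantees $R_0$ is invertible), set $R = R_0^{-1}$ so that $AR = Q$. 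Then $\|AR\| = \|Q\| = 1$ and $\chi(AR) = \chi(Q) = \overline\chi(Q) = \overline\chi(A)$ by the orthonormal-columns case and~\eqref{eq.chi.right.invar}, so $\|AR\|\cdot\chi(AR) = \overline\chi(A)$, which combined with the previous inequality shows the minimum is attained and equals $\overline\chi(A)$.

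The main obstacle I anticipate is the orthonormal-columns identity $\overline\chi(A) = \chi(A)$: this is where one must be careful that the isometry property $\|Ax\|=\|x\|$ is exactly what makes the ratio defining $\chi(A)$ collapse onto the ratio defining $\overline\chi(A)$, and that no extra slack is introduced when passing between the domain $\R^n$ of $x$ and the subspace $A(\R^n)$. The remaining pieces — submultiplicativity, invoking~\eqref{eq.chi.right.invar}, and the QR construction — are routine. A minor point worth stating explicitly is that the minimum in~\eqref{eq.chi.invar} is genuinely attained (not merely an infimum), which the QR argument already delivers.
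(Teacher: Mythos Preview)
Your proposal is correct and follows essentially the same approach as the paper: submultiplicativity for $\overline\chi(A)\le\|A\|\cdot\chi(A)$, the isometry property $\|Ax\|=\|x\|$ for the orthonormal case, and invariance~\eqref{eq.chi.right.invar} plus a choice of $R$ making $AR$ orthonormal for~\eqref{eq.chi.invar}. The only cosmetic differences are that the paper handles the orthonormal case directly via the operator-norm identity $\|AA_D^\dagger\|=\|A_D^\dagger\|$ rather than through the ratio expressions, and leaves the QR construction implicit where you spell it out.
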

\begin{proof}
Since $\|AA_D^\dagger\| \le \|A\| \cdot \|A_D^\dagger\|$, the construction~\eqref{eq.def.chi}
and~\eqref{eq.def.barchi} of $\chi(A)$ and $\overline \chi(A)$
readily implies that
\begin{equation}\label{eq.chi.sub}
\overline \chi(A) \le \|A\| \cdot \chi(A).
\end{equation}
Next, we show that $\overline \chi(A) = \chi(A)$ when the columns of $A$ are orthonormal.  To that end, observe that
if the columns of $A$ are orthonormal then $\|Ax\| = \|x\|$ for all $x\in \R^n$.  In particular, if the columns of $A$
are orthonormal then $\|AA_D^\dagger\| = \|A_D^\dagger\|$ for all $D\in\D$.  Thus~\eqref{eq.def.chi}
and~\eqref{eq.def.barchi} imply that $\overline \chi(A) = \chi(A)$.

Finally, from~\eqref{eq.chi.right.invar} and~\eqref{eq.chi.sub} it follows that
$
\overline \chi(A)  = \overline \chi(AR) \le \|AR\|\cdot \chi(AR)
$
for all $R\in\R^{m\times m}$ non-singular.  Thus~\eqref{eq.chi.invar} follows.

\end{proof}
In the special case when $m=n$ and  $A\in \R^{n\times n}$ is non-singular it is easy to see that
\[
\chi(A) = \|A^{-1}\|.
\]
We will rely on the following related characterization of $\chi(A)$
from~\cite{Fors96}.  The same characterization is also stated and proved in~\cite{Zhan00} by adapting a technique
from~\cite{ToddTY01}.  In the statement below for $A\in \R^{m\times n}$ and $J\subseteq[m] := \{1,\dots,m\}$ the matrix $A_J\in \R^{J \times n}$ denotes the $|J|\times n$ submatrix of $A$ defined by the rows of $A$ indexed by $J$.

\begin{proposition}\label{prop.chi} Suppose $A\in \R^{m\times n}$ has full column-rank.  Then
\begin{equation}\label{eq.chi}
\chi(A) = \max_{J\subseteq[m]\atop A_J \text{non-singular}} \|A_J^{-1}\|  =
\max_{J\subseteq[m]\atop A_J \text{non-singular}} \max_{v \in \R^J\atop \|A_J\transp v \|= 1} \|v\|.
\end{equation}
\end{proposition}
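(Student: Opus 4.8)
The plan is to establish~\eqref{eq.chi} in three pieces, where throughout $J$ ranges over those subsets of $[m]$ for which $A_J$ is non-singular (so necessarily $|J|=n$). The second equality is a routine dualization: since $A_J$ is square and invertible, $\|A_J^{-1}\|=\|(A_J^\transp)^{-1}\|$, and the substitution $w=A_J^\transp v$ turns $\max_{w\ne0}\|(A_J^\transp)^{-1}w\|/\|w\|$ into $\max_{v\ne0}\|v\|/\|A_J^\transp v\| = \max_{v\in\R^J,\ \|A_J^\transp v\|=1}\|v\|$. So it remains to prove $\chi(A)=\max_J\|A_J^{-1}\|$.

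For the inequality $\chi(A)\ge\max_J\|A_J^{-1}\|$, fix an admissible $J$ and, for $\varepsilon>0$, let $D_\varepsilon=\Diag(d^\varepsilon)\in\D$ with $d^\varepsilon_i=1$ for $i\in J$ and $d^\varepsilon_i=\varepsilon$ otherwise. Then $A^\transp D_\varepsilon A = A_J^\transp A_J + \varepsilon\,A_{[m]\setminus J}^\transp A_{[m]\setminus J}\to A_J^\transp A_J$ as $\varepsilon\to0^+$, and a look at the columns of $A_{D_\varepsilon}^\dagger=(A^\transp D_\varepsilon A)^{-1}A^\transp D_\varepsilon$ shows that those indexed by $J$ tend to $(A_J^\transp A_J)^{-1}A_J^\transp = A_J^{-1}$ while those indexed by $[m]\setminus J$ tend to $0$; thus $A_{D_\varepsilon}^\dagger\to\widehat A_J$, the $n\times m$ matrix carrying $A_J^{-1}$ in the columns indexed by $J$ and zeros elsewhere. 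Since the operator norm is continuous, $\|\widehat A_J\|=\|A_J^{-1}\|$, and $\|A_{D_\varepsilon}^\dagger\|\le\chi(A)$ for every $\varepsilon>0$ by~\eqref{eq.def.chi}, letting $\varepsilon\to0^+$ gives $\|A_J^{-1}\|\le\chi(A)$; maximizing over $J$ yields the claim.

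The reverse inequality is the crux, and it hinges on writing every weighted pseudo-inverse as a convex combination of the basic inverses $\widehat A_J$: for each $D=\Diag(d)\in\D$,
\[
A_D^\dagger=\sum_{J}\lambda_J\,\widehat A_J,\qquad \lambda_J:=\frac{\det(A_J)^2\prod_{i\in J}d_i}{\sum_{J'}\det(A_{J'})^2\prod_{i\in J'}d_i}\ge0,\qquad \sum_J\lambda_J=1,
\]
the sum being effectively over admissible $J$ since $\det(A_J)^2=0$ otherwise. To prove this identity I would apply Cramer's rule to the normal equations $(A^\transp DA)x=A^\transp Du$ defining $x=A_D^\dagger u$: the $k$-th numerator determinant equals $\det(A^\transp D\,\widetilde A_k)$, with $\widetilde A_k$ the matrix $A$ with its $k$-th column replaced by $u$; applying the Cauchy--Binet formula to this determinant and to $\det(A^\transp DA)=\sum_J\det(A_J)^2\prod_{i\in J}d_i$, and recognizing $\det((\widetilde A_k)_J)/\det(A_J)$ as the $k$-th component of $A_J^{-1}u_J$ by Cramer's rule again, produces exactly the displayed combination. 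Granting this, $\|A_D^\dagger\|\le\sum_J\lambda_J\|\widehat A_J\|=\sum_J\lambda_J\|A_J^{-1}\|\le\max_J\|A_J^{-1}\|$ for every $D\in\D$, so~\eqref{eq.def.chi} gives $\chi(A)\le\max_J\|A_J^{-1}\|$. The one genuinely fiddly step is the Cauchy--Binet/Cramer bookkeeping---keeping the index sets and submatrices straight---though it uses nothing beyond multilinearity of the determinant; this is Forsgren's characterization~\cite{Fors96}, also obtained in~\cite{Zhan00} by adapting a technique of~\cite{ToddTY01}, and an alternative to the whole convex-combination identity is to verify directly that the closure of $\{A_D^\dagger:D\in\D\}$ lies in the convex hull of $\{\widehat A_J\}$.
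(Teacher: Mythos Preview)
Your argument is correct. The second equality is indeed a direct duality computation, the lower bound $\chi(A)\ge\max_J\|A_J^{-1}\|$ follows from the limiting weight argument exactly as you wrote, and the convex-combination identity $A_D^\dagger=\sum_J\lambda_J\widehat A_J$ via Cauchy--Binet and Cramer gives the upper bound; your bookkeeping checks out. One minor quibble: the definition~\eqref{eq.def.chi} writes $\max$ rather than $\sup$, and your limiting construction does not produce a $D\in\D$ attaining $\|A_J^{-1}\|$, so strictly speaking you have shown the supremum over $D$ equals $\max_J\|A_J^{-1}\|$; that the supremum is attained is not obvious a priori, but this is a cosmetic issue with the definition rather than with your proof.

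As for comparison: the paper does not supply its own proof of this proposition. It states the result and attributes it to Forsgren~\cite{Fors96}, noting that it is also proved in~\cite{Zhan00} by adapting a technique from~\cite{ToddTY01}. What you have written is precisely a reconstruction of the Forsgren argument---the convex-combination identity for $A_D^\dagger$ in terms of the basic inverses $\widehat A_J$ with Cauchy--Binet weights is the key step in~\cite{Fors96}---so there is nothing to contrast; you have simply filled in the details the paper outsourced.
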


\subsection{Linear inequalities}

Suppose $A\in \R^{m\times n}$. Consider the feasibility problem
\begin{equation}\label{eq.li}
Ax \le b.
\end{equation}
The solution of~\eqref{eq.li} is the set
\begin{equation}\label{eq.poly}
P_A(b):=\{x\in \R^n: Ax \le b\}.
\end{equation}
Observe that $P_A(b) \ne \emptyset$ if and only if $b \in A(\R^n) + \R^m_+$.

\subsubsection{Condition measures $H(A)$ and $\overline H(A)$}

Suppose $A\in \R^{m\times n}$ is a nonzero matrix. The condition measure $H(A)$ is defined as the following worst-case
characteristic of the solution mapping $P_A:\R^m \rightrightarrows \R^n$ constructed via~\eqref{eq.poly}:
\begin{equation}\label{eq.def.H}
H(A) = \max_{b \in A(\R^n) + \R^m_+\atop x  \in \R^n\setminus P_A(b)} \frac{\dist(x,P_A(b))}{\|(Ax-b)_+\|}.
\end{equation}
Here and throughout the paper, $\dist(u,S)$ denotes the following point-to-set distance for all $u\in \R^d$ and
$S\subseteq \R^d$:
\[
\dist(u,S) = \inf_{v\in S} \|u-v\|.
\]
The constant $H(A)$ can be equivalently defined as the smallest constant depending only on $A$ such that the following
{\em error bound} holds for all $b \in A(\R^n)+\R^m_+$ and all $x\in \R^n$:
\[
\dist(x,P_{A}(b)) \le H(A) \cdot \|(Ax-b)_+\|.
\]
Again, it is not immediately evident that $H(A)$ is finite.  This fact was shown by Hoffman in his seminal
paper~\cite{Hoff52}.  Other proofs of this fundamental result can be found in~\cite{guler1995,PenaVZ19,WangL14}.
After Hoffman's initial work, the literature in error bounds has developed
extensively~\cite{luo1994,LuoT93,ngai2015,Nguy17,Pang97,ZhouS17}.  Error bounds play a key role in optimization and
variational analysis.  In particular, error bounds are widely used to established the convergence rate of a variety of
algorithms~\cite{BeckS15,Garb18,GutmP19,LacoJ15,LeveL10,LuoT93,NecoNG18,Pang97,PenaR16,WangL14}.

Consider the following reformulation of~\eqref{eq.li} in the subspace $A(\R^n)$:
\begin{equation}\label{eq.li.image}
y \le b, \; y \in A(\R^n).
\end{equation}
The solution of~\eqref{eq.li.image} is the set
\[
(b-\R^m_+) \cap A(\R^n) = AP_A(b).
\]
Define $\overline H(A)$ as the
following  worst-case characteristic of the solution mapping $AP_A:\R^m\rightrightarrows A(\R^n)$:
\begin{equation}\label{eq.def.barH}
\overline H(A) = \max_{b \in A(\R^n)+\R^m_+ \atop y \in A(\R^n) \setminus AP_A(b)}
\frac{\dist(y,AP_A(b))}{\|(y-b)_+\|}.
\end{equation}
The constant $\overline H(A)$ can be equivalently defined as the smallest constant depending only on the subspace
$A(\R^n)$ such that the following error bound holds for all $b\in A(\R^n) + \R^m_+$ and $v\in A(\R^n) + b$
\[
\dist(v,(A(\R^n) + b)\cap\R^m_+) \le \overline H(A) \cdot \dist(v,\R^m_+).
\]
\subsubsection{Properties of $H(A)$ and $\overline H(A)$}

By construction, $\overline H(A)$ depends only on  $A(\R^n)$ and thus is invariant under multiplication by non-singular matrices, i.e., 
\begin{equation}\label{eq.H.right.invar}
\overline H(A) = \overline H(AR), \text{ for all  non-singular }  R\in \R^{n\times n}.
\end{equation}
On the other hand, $H(A)$ is not invariant under multiplication by non-singular matrices.  Proposition~\ref{prop.H.invar} shows that  $\overline H(A)$ is the closure of $H(A)$ under this kind of invariance.
\begin{proposition}\label{prop.H.invar}
Suppose $A\in \R^{m\times n}$ is a nonzero matrix.  Then $\overline H(A) \le \|A\| \cdot H(A)$ and $\overline H(A) =
H(A)$ when the \underline{nonzero} columns of $A$ are orthonormal.  In particular,
\begin{equation}\label{eq.H.invar}
\overline H(A) = \min_{R \in \R^{n\times n} \atop \text{ non-singular}} \|AR\| \cdot H(AR).
\end{equation}
\end{proposition}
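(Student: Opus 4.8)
The plan is to mirror the three-step proof of Proposition~\ref{prop.chi.invar}, paying extra attention to the fact that $H(A)$ only requires $A\neq 0$, so $A$ may be rank-deficient or have zero columns. \textbf{Step 1: $\overline H(A)\le\|A\|\cdot H(A)$.} The key observation is that $AP_A(b)$ is exactly the image of $P_A(b)$ under the linear map $A$ (indeed $AP_A(b)=(b-\R^m_+)\cap A(\R^n)$, as noted above). Given a pair $(b,y)$ admissible for the supremum defining $\overline H(A)$, I would write $y=Ax$, note that $(y-b)_+=(Ax-b)_+$ and that $y\notin AP_A(b)$ forces $x\notin P_A(b)$, and estimate $\dist(y,AP_A(b))=\inf_{x'\in P_A(b)}\|A(x-x')\|\le\|A\|\cdot\dist(x,P_A(b))$. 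Dividing by $\|(Ax-b)_+\|$ and taking the supremum over admissible $(b,y)$ gives the bound; this is precisely the Hoffman analogue of inequality~\eqref{eq.chi.sub}.

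\textbf{Step 2: equality when the nonzero columns of $A$ are orthonormal.} Let $B$ be the submatrix of $A$ formed by its nonzero columns. Then $B$ has orthonormal columns, and since $A\neq 0$ we get $\|A\|=\|B\|=1$, so Step 1 already yields $\overline H(A)\le H(A)$. For the reverse inequality I would use that, writing $Ax=B\tilde x$ where $\tilde x$ collects the entries of $x$ in the nonzero-column positions, one has $P_A(b)=\{x:B\tilde x\le b\}$, hence $\dist(x,P_A(b))=\dist(\tilde x,P_B(b))$ and $AP_A(b)=B\,P_B(b)$; because $B$ acts as an isometry, $\dist(Ax,AP_A(b))=\dist(\tilde x,P_B(b))=\dist(x,P_A(b))$ \emph{with equality}. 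Thus each $H(A)$-admissible pair $(b,x)$ produces an $\overline H(A)$-admissible pair $(b,Ax)$ with the same ratio, and hence $H(A)\le\overline H(A)$.

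\textbf{Step 3: the $\min$ formula~\eqref{eq.H.invar}.} For every non-singular $R$, combining the invariance~\eqref{eq.H.right.invar} with Step 1 gives $\overline H(A)=\overline H(AR)\le\|AR\|\cdot H(AR)$, so $\overline H(A)$ lower-bounds the right-hand side. The step I expect to be the main obstacle is producing a non-singular $R$ attaining equality, i.e.\ one for which $AR$ has orthonormal nonzero columns: unlike in Proposition~\ref{prop.chi.invar}, a QR/Cholesky factor of $A$ does not suffice when $A$ is rank-deficient. My plan is to set $r=\rank(A)\ge 1$, take an orthonormal basis $q_1,\dots,q_r$ of $A(\R^n)$, pick $u_j\in\R^n$ with $Au_j=q_j$ for $j\le r$ together with a basis $u_{r+1},\dots,u_n$ of $\Ker A$, and let $R$ be the $n\times n$ matrix with columns $u_1,\dots,u_n$. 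Applying $A$ to a hypothetical linear dependence among the $u_j$ and using independence first of the $q_j$ and then of the kernel basis shows that $R$ is non-singular, while $AR$ has columns $q_1,\dots,q_r,0,\dots,0$, whose nonzero ones are orthonormal. Then Step 2 together with $\|AR\|=1$ gives $\|AR\|\cdot H(AR)=H(AR)=\overline H(AR)=\overline H(A)$, so the infimum in~\eqref{eq.H.invar} is attained and the identity follows.
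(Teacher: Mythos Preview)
Your proposal is correct and follows essentially the same three-step argument as the paper: the Lipschitz estimate $\dist(Ax,AP_A(b))\le\|A\|\,\dist(x,P_A(b))$ for Step~1, the reduction via the submatrix of nonzero columns (the paper calls it $\tilde A$, you call it $B$) for Step~2, and the combination of invariance with Step~2 for Step~3. Your version is in fact more complete than the paper's, since the paper simply asserts ``Thus~\eqref{eq.H.invar} follows'' without writing down the non-singular $R$ that makes the nonzero columns of $AR$ orthonormal, whereas you give the explicit construction via preimages of an orthonormal basis of $A(\R^n)$ together with a basis of $\Ker A$.
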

\begin{proof}
This proof is similar to the proof of Proposition~\ref{prop.chi.invar}.  Observe that  $\dist(Ax,AP_A(b)) \le
\|A\|\cdot \dist(x,P_A(b))$ for all $x\in \R^n$ because $\|Ax - Au\| \le \|A\|\cdot \|x-u\|$ for all $x,u\in \R^n$.
Hence~\eqref{eq.def.H} and~\eqref{eq.def.barH} imply that
\begin{equation}\label{eq.H.sub}
\overline H(A) \le \|A\| \cdot H(A).
\end{equation}
We next show that $\overline H(A) = H(A)$ when the nonzero columns of $A$ are orthonormal.  For ease of exposition,
consider first the case when all columns of $A$ are nonzero and orthonormal.  In this case it is easy to see that $y
\in  A(\R^n)$ if and only if $y = Ax$ for some unique $x\in \R^n$ with $\|y\| = \|x\|$.
Therefore $\dist(y,AP_A(b)) = \dist(x,P_A(b))$ for all $y = Ax\in A(\R^n)$.
From~\eqref{eq.def.H} and~\eqref{eq.def.barH} it follows that $\overline H(A) = H(A)$.

Next consider the more  general case when some columns of $A$ are zero.  Without loss of generality assume that $A =
\matr{\tilde A & 0}$ for some $\tilde A\in \R^{m \times k}$ with nonzero orthonormal columns for some $k < n$.  Since
the columns of $\tilde A$ are orthonormal, we have $\overline H(\tilde A) = H(\tilde A)$.  To finish, it suffices to
show that $\overline H( A) = \overline H(\tilde A)$ and
$ H( A) =  H(\tilde A)$.  Indeed, $\overline H( A) = \overline H(\tilde A)$ holds because $A(\R^n) = \tilde A(\R^k)$
and $AP_A(b) = \tilde A P_{\tilde A}(b)$.  On the other hand, for  $x\in \R^n$ let $\tilde x\in \R^k$ denote the
subvector of first $k$ entries of $x$.  Then $Ax = \tilde A \tilde x$ for all $x\in \R^n$ and thus $P_A(b) = P_{\tilde
A}(b) \times \R^{n-k}$.  Hence
\[
H(A) = \max_{b \in A(\R^n) + \R^m_+\atop x  \in \R^n\setminus P_A(b)} \frac{\dist(x,P_A(b))}{\|(Ax-b)_+\|} =
\max_{b \in \tilde A(\R^k) + \R^m_+\atop \tilde x  \in \R^n\setminus P_A(b)} \frac{\dist(\tilde x,P_{\tilde
A}(b))}{\|(\tilde A\tilde x-b)_+\|} = H(\tilde A).
\]
Finally from~\eqref{eq.H.right.invar}  and~\eqref{eq.H.sub} it follows that
$
\overline H(A)  = \overline H(AR) \le \|AR\|\cdot H(AR)
$
for all $R\in\R^{m\times m}$ non-singular.  Thus~\eqref{eq.H.invar} follows.

\end{proof}

We will also rely on the following two properties of $H(A)$. First, in the special case when $A(\R^n) \cap \R^m_{++}
\ne \emptyset$ or equivalently $A(\R^n) + \R^m_+ = \R^m$ we have~\cite[Corollary 1]{PenaVZ19}
\begin{equation}\label{eq.H.simple}
H(A) = \max_{v\in \R^m_+\atop \|A\transp v\| =1} \|v\|.
\end{equation}
Second, for general $A\in \R^{m\times n}$ we have the following related characterization of $H(A)$ discussed
in~\cite{PenaVZ19} but that can be traced back to~\cite{KlatT95,WangL14,Zhan00}.

\begin{proposition}\label{prop.H} Suppose $A\in \R^{m\times n}$ is full column-rank.  Then
\begin{equation}\label{eq.H}
H(A) = \max_{J\subseteq[m]\atop A_J(\R^n) \cap \R^J_{++} \ne \emptyset} \max_{v \in \R^J_+ \atop \|A_J\transp v\| = 1}
\|v\| = \max_{J\subseteq[m]\atop A_J \text{non-singular}} \max_{v \in \R^J_+ \atop \|A_J\transp v\| = 1} \|v\|.
\end{equation}
\end{proposition}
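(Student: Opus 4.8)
The plan is to derive~\eqref{eq.H} by passing through the family $\mathcal F:=\{J\subseteq[m]: A_J(\R^n)\cap\R^J_{++}\ne\emptyset\}$ of ``surjective'' index sets. Two preliminary facts are used throughout: writing $A_j\in\R^n$ for the $j$-th row of $A$, a Gordan-type alternative gives $J\in\mathcal F \iff 0\notin\conv\{A_j:j\in J\} \iff A_J(\R^n)+\R^J_+=\R^J$ (the last equivalence because $A_J(\R^n)$ is a subspace); and for $J\in\mathcal F$ the matrix $A_J$ is nonzero, so~\eqref{eq.H.simple} applied to it yields $H(A_J)=\max_{v\in\R^J_+,\,\|A_J\transp v\|=1}\|v\|$ with the maximum attained and finite (the constraint $J\in\mathcal F$ forbids a nonzero $v\ge0$ with $A_J\transp v=0$, making the feasible set compact). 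It therefore suffices to prove
\[
H(A)=\max_{J\in\mathcal F}H(A_J)=\max_{J\subseteq[m]\atop A_J\text{ nonsingular}}H(A_J).
\]

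For $H(A)\ge H(A_J)$ with $J\in\mathcal F$ I would ``lift'' test pairs of the subsystem $A_J$: given $b'\in\R^J$ and $x'\notin P_{A_J}(b')$, let $\bar x'$ be the metric projection of $x'$ onto the nonempty polyhedron $P_{A_J}(b')$ and extend $b'$ to $b\in\R^m$ by $b_j:=\max\{A_j\transp\bar x',A_j\transp x'\}$ for $j\notin J$. Then $A\bar x'\le b$ so $b\in A(\R^n)+\R^m_+$; $x'\notin P_A(b)$; $(Ax'-b)_j\le0$ for $j\notin J$, so $\|(Ax'-b)_+\|=\|(A_Jx'-b')_+\|$; and $P_A(b)\subseteq P_{A_J}(b')$ with $\bar x'\in P_A(b)$, so $\dist(x',P_A(b))=\dist(x',P_{A_J}(b'))$. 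Hence the ratio in~\eqref{eq.def.H} at $(b,x')$ equals the $H(A_J)$-ratio at $(b',x')$, and a supremum over $(b',x')$ would finish this direction.

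The reverse inequality $H(A)\le\max_{J\in\mathcal F}H(A_J)$ is, I expect, the crux. Fix $b\in A(\R^n)+\R^m_+$ and $x\notin P_A(b)$, with $\bar x\ne x$ the projection of $x$ onto $P_A(b)$. The optimality conditions for the projection give $x-\bar x=\sum_j\mu_jA_j$ with $\mu\ge0$ supported on the constraints active at $\bar x$. A Carath\'eodory/Gordan reduction---while the support carries a nonnegative null combination of the $A_j$'s, subtract a multiple of it to kill a coordinate, keeping $\mu\ge0$ supported on active constraints (so $\bar x$ stays the projection)---terminates at a nonempty support $K$ with $0\notin\conv\{A_j:j\in K\}$, i.e.\ $K\in\mathcal F$, and $x-\bar x=A_K\transp\mu$ with $\mu\in\R^K_{++}$. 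Since every $j\in K$ is active, $(Ax-b)_K=A_KA_K\transp\mu$; using $\mu\ge0$ and Cauchy--Schwarz this yields $\|x-\bar x\|^2=\mu\transp(Ax-b)_K\le\mu\transp\big((Ax-b)_+\big)_K\le\|\mu\|\,\|(Ax-b)_+\|$, hence
\[
\frac{\dist(x,P_A(b))}{\|(Ax-b)_+\|}=\frac{\|x-\bar x\|}{\|(Ax-b)_+\|}\le\frac{\|\mu\|}{\|A_K\transp\mu\|}\le\max_{v\in\R^K_+,\,\|A_K\transp v\|=1}\|v\|=H(A_K)\le\max_{J\in\mathcal F}H(A_J).
\]
Taking the supremum over $(b,x)$ completes this inequality, and together with the previous paragraph gives the first equality.

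Finally, the second equality. Every nonsingular square $A_J$ has $A_J(\R^n)=\R^J$, so $J\in\mathcal F$ and $\max_{A_J\text{ nonsingular}}H(A_J)\le\max_{J\in\mathcal F}H(A_J)$. For the reverse, take $(J,v^*)$ attaining the latter and let $K=\mathrm{supp}(v^*)$, so $v^*_K\in\R^K_{++}$ and $\|A_K\transp v^*_K\|=\|A_J\transp v^*\|=1$. If $\{A_j:j\in K\}$ were linearly dependent, say $A_K\transp c=0$ with $c\ne0$, then $v^*_K+tc$ would stay in $\R^K_{++}$ for small $|t|$ with $\|A_K\transp(v^*_K+tc)\|=1$, and $\|v^*_K+tc\|>\|v^*_K\|$ for a suitable small $t$, contradicting maximality. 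So $\{A_j:j\in K\}$ is linearly independent; as $A$ has full column rank its rows span $\R^n$, so we may extend to a basis $\{A_j:j\in J'\}$ of $\R^n$ with $K\subseteq J'\subseteq[m]$ and $A_{J'}$ nonsingular. Padding $v^*_K$ by zeros to $v\in\R^{J'}_+$ gives $\|A_{J'}\transp v\|=1$ and $\|v\|=\|v^*\|$, so $H(A_{J'})\ge\|v^*\|=\max_{J\in\mathcal F}H(A_J)$, which establishes the opposite inequality and hence~\eqref{eq.H}.
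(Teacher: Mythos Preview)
The paper does not actually prove this proposition; it is stated as a known characterization and attributed to~\cite{PenaVZ19} (with antecedents in~\cite{KlatT95,WangL14,Zhan00}), so there is no in-paper proof to compare against.

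That said, your argument is correct and self-contained, and it follows the standard route used in those references. The lifting construction for $H(A)\ge H(A_J)$ when $J\in\mathcal F$ is clean (your extension of $b'$ makes the extra constraints inactive at both $x'$ and $\bar x'$, which is exactly what forces $\dist(x',P_A(b))=\dist(x',P_{A_J}(b'))$ and $\|(Ax'-b)_+\|=\|(A_Jx'-b')_+\|$). The reverse inequality via the projection KKT multiplier and a Carath\'eodory/Gordan reduction of its support is also correct; the chain $\|x-\bar x\|^2=\mu\transp(Ax-b)_K\le\|\mu\|\,\|(Ax-b)_+\|$ is valid precisely because every $j\in K$ is active at $\bar x$ and $\mu\ge 0$. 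The reduction terminates with $K\ne\emptyset$ since $A_K\transp\mu=x-\bar x\ne 0$ is preserved at each step. Finally, the passage to nonsingular submatrices---showing the optimal support is linearly independent by a perturbation contradiction and then extending to a row-basis of $\R^n$---is correct and is the one place where the full-column-rank hypothesis is genuinely used. One cosmetic remark: the parenthetical ``(so $\bar x$ stays the projection)'' in your reduction step is true but never used; all you need, and have, is that the representation $x-\bar x=A_K\transp\mu$ with $K$ contained in the active set survives the subtraction.
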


Observe both the similarity and subtle difference between the right-most expressions in the
characterization~\eqref{eq.chi} of $\chi(A)$ in Proposition~\ref{prop.chi} and the characterization~\eqref{eq.H} of
$H(A)$ in Proposition~\ref{prop.H}: the first maximum is taken over the same collection of sets $J$ in
both~\eqref{eq.chi} and~\eqref{eq.H}  whereas the second maximum is taken over $v\in\R^J$ in~\eqref{eq.chi} and over
$v\in \R^J_+$ in~\eqref{eq.H}.

\section{Proof of Theorem~\ref{thm.main}}
\label{sec.proof}
We will prove the following stronger version of Theorem~\ref{thm.main}.

\begin{theorem}\label{thm.main.strong}
Let $A\in \R^{m\times n}$ be a full column-rank matrix.  Then
\begin{equation}\label{eq.thm.full}
\chi(A) = \max_{S\in\S} H(SA) = H(\A),
\end{equation}
where
$\A\in\R^{2m \times n}$ is the column-wise concatenation of $A$ and $-A$, that is,
\begin{equation}\label{eq.concatenate}
\A = \matr{A \\ -A}.
\end{equation}
Furthermore, if all rows of $A$ are nonzero then~\eqref{eq.thm.full} can be sharpened to
\begin{equation}\label{eq.thm.sharper}
\chi(A) = \max_{S\in\S \atop SA(\R^n) \cap \R^m_{++} \ne \emptyset} H(SA).
\end{equation}
\end{theorem}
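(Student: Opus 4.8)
The plan is to prove the three claimed identities by relating the combinatorial characterizations of $\chi$ and $H$ given in Propositions~\ref{prop.chi} and~\ref{prop.H}. First I would establish the middle equality $\max_{S\in\S} H(SA) = H(\A)$ purely formally: for any $S = \Diag(s)\in\S$ the rows of $SA$ are a subset of the rows of $\A$ (each row $a_i\transp$ of $A$ contributes $s_i a_i\transp$, which equals either the $i$-th or the $(m+i)$-th row of $\A$), so $H(SA)\le H(\A)$ by the right-most characterization in~\eqref{eq.H}, since any index set $J$ with $(SA)_J$ non-singular lifts to an index set for $\A$ with the same submatrix. Conversely, using~\eqref{eq.H} for $H(\A)$, an optimal $J\subseteq[2m]$ with $\A_J$ non-singular picks at most one of the two copies of each row of $A$ (two copies of the same row are linearly dependent, so both cannot appear in a non-singular $\A_J$); choosing $s_i$ to match the sign of whichever copy of row $i$ lies in $J$ yields an $S\in\S$ with $(SA)_{J'} = \A_J$ for the corresponding $J'\subseteq[m]$, hence $H(SA) = \|\A_J^{-1}\|\cdot(\text{the relevant }v\text{-maximum})$ recovers the value. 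This gives $H(\A)\le \max_{S}H(SA)$ and closes the middle equality.

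The heart of the matter is $\chi(A) = H(\A)$. I would prove both inequalities by comparing~\eqref{eq.chi} and the right-most expression in~\eqref{eq.H} index-set by index-set. The inequality $\chi(A)\ge H(\A)$ is the easy direction: for any $J\subseteq[2m]$ with $\A_J$ non-singular, $\A_J = S'A_{J'}$ for some signature-like diagonal $S'$ and some $J'\subseteq[m]$ with $A_{J'}$ non-singular (as in the previous paragraph), and the inner maximum over $v\in\R^J_+$ with $\|\A_J\transp v\|=1$ is bounded above by the inner maximum over all $v\in\R^{J'}$ with $\|A_{J'}\transp v\|=1$ (since $\|(S'A_{J'})\transp v\| = \|A_{J'}\transp S' v\|$ and $\|S'v\| = \|v\|$, dropping the sign constraint only enlarges the feasible set); by~\eqref{eq.chi} this is at most $\chi(A)$. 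The reverse inequality $\chi(A)\le H(\A)$ is the step I expect to be the main obstacle: given $J'\subseteq[m]$ with $A_{J'}$ non-singular achieving $\chi(A) = \max_{\|A_{J'}\transp v\|=1}\|v\|$, let $v^\star$ be an optimizer; I need to realize $|v^\star|$ (entrywise absolute value) as a feasible point for the $H(\A)$ problem with a matching objective value, i.e., find signs $s_i$ so that with $S' = \Diag(s)$ the vector $S'v^\star\ge 0$ and $\|(S'A_{J'})\transp (S'v^\star)\| = \|A_{J'}\transp v^\star\| = 1$ — which holds for $s_i = \operatorname{sign}(v^\star_i)$ — and then check that the index set $\{i : s_i = 1\}\cup\{m+i : s_i = -1\}$ restricted to $J'$ gives a non-singular submatrix of $\A$ equal to $S'A_{J'}$, so $\|S'v^\star\| = \|v^\star\| = \chi(A)$ is a valid lower bound for $H(\A)$ via~\eqref{eq.H}. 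The subtlety is handling entries $v^\star_i = 0$ (the sign is free, pick $s_i = 1$) and confirming that $S'A_{J'}$ really is a submatrix of $\A$ indexed by a legitimate $J\subseteq[2m]$; this is bookkeeping but must be done carefully.

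Finally, for the sharpened identity~\eqref{eq.thm.sharper} when all rows of $A$ are nonzero, I would argue that the maximum in $\max_{S\in\S}H(SA)$ is actually attained at some $S$ with $SA(\R^n)\cap\R^m_{++}\ne\emptyset$. The idea: take the optimal $S'$ and $J'$ from the reverse inequality above with optimizer $v^\star$ satisfying $S'v^\star > 0$ strictly on $J'$ (a generic optimizer of a norm-maximization over a compact set can be taken with full support on the active coordinates; alternatively, perturb within the relative interior). Then $A_{J'}\transp(S'v^\star)$ has a strictly positive combination, and since $A_{J'}$ is non-singular the polyhedral cone condition $S'A_{J'}(\R^{|J'|})\cap\R^{J'}_{++}\ne\emptyset$ holds; one then extends $S'$ arbitrarily off $J'$ and uses that every row of $A$ is nonzero together with~\eqref{eq.H.simple}-type reasoning (or a small perturbation argument) to push the interior condition from the submatrix indexed by $J'$ up to all of $[m]$. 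The delicate point here is ensuring strict positivity of the optimizer, for which I would invoke that $\chi(A) = \|A_{J'}^{-1}\|$ equals an operator norm whose maximizing left singular vector $v^\star$ can be chosen — after a further sign flip if necessary — to avoid zero entries because the rows of $A_{J'}$ are nonzero; if some coordinate genuinely must vanish, one drops that row from $J'$ and repeats with a smaller index set, which does not decrease the value.
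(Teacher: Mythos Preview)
Your argument for the two identities in~\eqref{eq.thm.full} is correct and uses the same ingredients as the paper (the combinatorial characterizations~\eqref{eq.chi} and~\eqref{eq.H} together with the sign-flip $v^\star \mapsto S'v^\star$). The only difference is organizational: you first prove $\max_{S\in\S}H(SA)=H(\A)$ combinatorially and then $\chi(A)=H(\A)$ directly, whereas the paper first proves $\chi(A)=\max_{S\in\S}H(SA)$ and then sandwiches $H(\A)$ via $\chi(A)=\chi(\A)\ge H(\A)\ge H(SA)$. Either route works.

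Your treatment of the sharpened identity~\eqref{eq.thm.sharper}, however, has a genuine gap. You locate the difficulty in forcing the optimizer $v^\star$ to have no zero entries on $J'$, and propose to fix this either by a singular-vector argument (``the maximizing left singular vector can be chosen to avoid zero entries because the rows of $A_{J'}$ are nonzero'') or by dropping rows from $J'$. Neither works: nonzero rows do not prevent a top singular vector from having zero coordinates (take $A_{J'}=I$), and dropping a row from a square non-singular $A_{J'}$ destroys invertibility, so the characterization~\eqref{eq.chi} no longer applies. More importantly, zero entries of $v^\star$ are not the obstacle at all---they only \emph{free up} the corresponding signs $\hat S_{ii}$. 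The actual task is to choose the signs $\hat S_{ii}$ for $i\notin J'$ (and for $i\in J'$ with $v^\star_i=0$) so that $\hat S A(\R^n)\cap\R^m_{++}\ne\emptyset$, while keeping $\hat S_{ii}=\operatorname{sign}(v^\star_i)$ for $i\in J'$ with $v^\star_i\ne 0$. Your phrase ``extends $S'$ arbitrarily off $J'$'' is precisely what fails: the extension must be chosen with care.

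The correct mechanism, which the paper sketches, is this. Because $A_{J'}$ is non-singular, the open cone $\{x\in\R^n:\hat S_{ii}\,a_i\transp x>0 \text{ for all } i\in J' \text{ with } v^\star_i\ne 0\}$ is nonempty (linearly independent rows always give a feasible strict system). Since every row $a_i$ is nonzero, each set $\{x:a_i\transp x=0\}$ is a hyperplane, so after removing the finitely many such hyperplanes for the remaining indices $i$ the cone is still nonempty. Pick any $x$ in what remains; then $a_i\transp x\ne 0$ for every $i$, and one sets $\hat S_{ii}:=\operatorname{sign}(a_i\transp x)$ for all indices not already forced. This $\hat S$ satisfies both $\hat S_{J'}v^\star\ge 0$ (so $H(\hat S A)\ge\chi(A)$ as before) and $\hat S A x>0$ (so $\hat S A(\R^n)\cap\R^m_{++}\ne\emptyset$). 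Your ``small perturbation argument'' hint is in the right direction, but the object to perturb is a witness $x\in\R^n$ for the interior condition, not the dual optimizer $v^\star$.
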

\begin{proof}
From~\eqref{eq.chi} in Proposition~\ref{prop.chi} and~\eqref{eq.H} in Proposition~\ref{prop.H} it immediately follows
that $H(A) \le \chi(A)$.  Thus the sign invariance of $\chi(A)$ readily yields
\[
\chi(A) = \max_{S\in\S} \chi(SA) \ge \max_{S\in\S} H(SA).
\]
To prove the reverse inequality we rely on~\eqref{eq.chi} and~\eqref{eq.H} again.  Suppose $\hat J \subseteq [m]$ is
such that $A_{\hat J}$ is non-singular and
\[
\chi(A) = \|A_{\hat J}^{-1}\| = \max_{v\in \R^{\hat J}\atop \|A_{\hat J}\transp v\|=1} \|v\|.
\]
Thus $\chi(A) = \|\hat v\|$ for some $\hat v \in \R^{\hat J}$ such that $\|A_{\hat J}\transp \hat v\|=1$.  Choose
$\hat S\in \S$ such that $\hat S_{ii} = \text{sign}(v_i)$ for each $i\in \hat J$ and let $u:=\hat S_{\hat J}\hat v \in
\R^{\hat J}_+$.  Observe that $(\hat SA)_J = \hat S_{\hat J} A_{\hat J}$ is nonsingular and
\[
\|(\hat SA)_{\hat J}\transp u\| = \|A_{\hat J}\transp \hat S_{\hat J} u \|= \|A_{\hat J}\transp v\| = 1.
\]
Therefore
\[
\max_{S\in\S} H(SA) \ge H(\hat SA) \ge \max_{w \in \R^{\hat J}_+ \atop \|(\hat SA)_{\hat J}\transp w\|=1} \|w\| \ge
\|u\| = \|\hat v\| = \chi(A).
\]
Thus the first identity in~\eqref{eq.thm.full} is established.  Next, Proposition~\ref{prop.chi} and
Proposition~\ref{prop.H} imply that for all $S\in \S$
\[
\chi(A) = \chi(\A) \ge H(\A) \ge H(SA).
\]
The second inequality follows because all rows of $SA$ are rows of $\A$ as well.  Hence by taking the maximum over
$S\in\S$ and applying the first identity in~\eqref{eq.thm.full}, we obtain the second identity in~\eqref{eq.thm.full}.

When all rows of $A$ are non-zero, it follows that $A \tilde v$ has all nonzero entries for an arbitrarily small
perturbation $\tilde v$ of $\hat v$.  Therefore the matrix $\hat S \in \S$ above can be chosen so that both $\hat
S_{\hat J}\hat v \in \R^{\hat J}_+$ and $\hat S A\transp \tilde v \in \R^m_{++}.$  Thus the sharper
identity~\eqref{eq.thm.sharper} follows.
\end{proof}

\begin{corollary}\label{cor}
Let $A\in \R^{m\times n}$ be a full column-rank matrix.  Then
\[
\overline\chi(A) = \max_{S\in\S} \overline H(SA) = \overline H(\A),
\]
where $\A$ is as in~\eqref{eq.concatenate}.
Furthermore, if all rows of $A$ are nonzero then
\[
\overline\chi(A) = \max_{S\in\S \atop SA(\R^n) \cap \R^m_{++} \ne \emptyset} \overline H(SA).
\]
\end{corollary}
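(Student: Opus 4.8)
The plan is to deduce the corollary from Theorem~\ref{thm.main.strong} after replacing $A$ by a matrix $Q$ whose columns are orthonormal and span $A(\R^n)$. This one substitution collapses $\chi$ onto $\overline\chi$ and, for every $S\in\S$, collapses $H(SQ)$ onto $\overline H(SQ)$, so the barred identities of the corollary should follow from the unbarred ones in Theorem~\ref{thm.main.strong}. Concretely, factor $A=QP$ with $Q\in\R^{m\times n}$ having orthonormal columns and $P\in\R^{n\times n}$ non-singular; then $Q(\R^n)=A(\R^n)$, the matrix $Q=AP^{-1}$ is again full column-rank, and every zero row of $Q$ is a zero row of $A$, so that if all rows of $A$ are nonzero then so are all rows of $Q$.

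Since $\overline\chi$ depends only on $A(\R^n)$ and $Q(\R^n)=A(\R^n)$, we get $\overline\chi(A)=\overline\chi(Q)$, and $\overline\chi(Q)=\chi(Q)$ by Proposition~\ref{prop.chi.invar} because $Q$ has orthonormal columns. For each $S\in\S$ the matrix $SQ$ still has orthonormal columns, since $S\transp S=\Id$, so $\overline H(SQ)=H(SQ)$ by Proposition~\ref{prop.H.invar}; and because $SA(\R^n)=SQ(\R^n)$ while $\overline H$ depends only on that subspace, $\overline H(SA)=\overline H(SQ)=H(SQ)$. Substituting these identities into Theorem~\ref{thm.main.strong} applied to $Q$ gives
\[
\overline\chi(A)=\chi(Q)=\max_{S\in\S}H(SQ)=\max_{S\in\S}\overline H(SA),
\]
and, when all rows of $A$ (hence of $Q$) are nonzero, the sharpened part of Theorem~\ref{thm.main.strong} applied to $Q$ replaces the index set $\S$ in the middle maxima by $\{S\in\S:SQ(\R^n)\cap\R^m_{++}\ne\emptyset\}=\{S\in\S:SA(\R^n)\cap\R^m_{++}\ne\emptyset\}$, which is the sharpened identity.

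For the identity involving the concatenation $\A$, I would try to mimic the passage $\chi(A)=\max_S H(SA)=H(\A)$ of Theorem~\ref{thm.main.strong}: apply that theorem to $AR$ for each non-singular $R\in\R^{n\times n}$ to get $\chi(AR)=H(\A R)$ with $\A R=\matr{AR\\-AR}$, multiply by $\|AR\|$, minimize over $R$, and invoke Propositions~\ref{prop.chi.invar} and~\ref{prop.H.invar}. The operator-norm bookkeeping in this last step is, I expect, the main obstacle: one has $\|\A R\|=\sqrt2\,\|AR\|$ and not $\|AR\|$, so this route naturally yields $\overline H(\A)=\sqrt2\,\overline\chi(A)$. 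I would therefore want to double-check whether the stated identity $\overline\chi(A)=\overline H(\A)$ should carry this $\sqrt2$ factor, or whether it is $\overline\chi(\A)$ --- which does equal $\overline H(\A)$, by the orthonormalization argument above applied to $\A$ in place of $A$ --- that belongs on the right-hand side. The $\max_{S\in\S}\overline H(SA)$ identities above are untouched by this point and go through verbatim.
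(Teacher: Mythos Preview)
Your argument for the identities $\overline\chi(A)=\max_{S\in\S}\overline H(SA)$ and for the sharpened version is correct and is exactly how the paper's one-line proof (``immediate consequence of Theorem~\ref{thm.main.strong}, Proposition~\ref{prop.chi.invar} and Proposition~\ref{prop.H.invar}'') should be unpacked: pass to an orthonormal-column matrix $Q$ with $Q(\R^n)=A(\R^n)$, so that Propositions~\ref{prop.chi.invar} and~\ref{prop.H.invar} erase the bars, and then apply Theorem~\ref{thm.main.strong} to $Q$. There is no substantive difference between your route and the paper's here.

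Your hesitation about the second identity $\overline\chi(A)=\overline H(\A)$ is well founded; in fact the identity is false as stated, and the paper's one-line appeal to Theorem~\ref{thm.main.strong} and Propositions~\ref{prop.chi.invar},~\ref{prop.H.invar} cannot produce it. Your norm bookkeeping is correct: $\|\A x\|=\sqrt2\,\|Ax\|$ for every $x$, so the orthonormalization of $\A$ is $\mathbf{Q}/\sqrt2$ (with $\mathbf{Q}=\bigl[\begin{smallmatrix}Q\\-Q\end{smallmatrix}\bigr]$), and Proposition~\ref{prop.H.invar} then gives $\overline H(\A)=H(\mathbf{Q}/\sqrt2)=\sqrt2\,H(\mathbf{Q})=\sqrt2\,\chi(Q)=\sqrt2\,\overline\chi(A)$. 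A direct check confirms this: for $m=n=1$ and $A=[1]$ one has $\overline\chi(A)=1$, whereas $\A=\bigl[\begin{smallmatrix}1\\-1\end{smallmatrix}\bigr]$ satisfies $\overline H(\A)=\sqrt2$ (take $b=0$ and $y=(t,-t)$ with $t>0$; then $\dist(y,\A P_\A(0))/\|y_+\|=\sqrt2\,t/t$). Your proposed fix is the right one: the statement that \emph{does} follow from the cited results is $\overline\chi(\A)=\overline H(\A)$, and both equal $\sqrt2\,\overline\chi(A)$. The remaining parts of the corollary are unaffected by this and your proof of them stands.
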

\begin{proof}
This is an immediate consequence of Theorem~\ref{thm.main.strong}, Proposition~\ref{prop.chi.invar} and
Proposition~\ref{prop.H.invar}.
\end{proof}

We note that when $A\in \R^{m\times n}$ is full column-rank but some rows of $A\in \R^{m\times n}$ are zero, then the
following amended version of~\eqref{eq.thm.sharper} holds for the submatrix $\tilde A\in \R^{\ell \times n}$ obtained
after deleting the zero rows from $A$:
\[
\chi(\tilde A) = \max_{S\in\S \atop S\tilde A(\R^n) \cap \R^\ell_{++} \ne \emptyset} H(S\tilde A).
\]
The construction of $\chi(A)$ and $H(A)$ enables us to rewrite  the latter identity as follows
\[
\chi(A) = \max_{S\in\S \atop S\tilde A(\R^n) \cap \R^\ell_{++} \ne \emptyset} H(SA).
\]

\section{Renegar's and Grassmannian condition numbers}
\label{sec.renegar}

Suppose $A \in \R^{m\times n}$ is such that $A(\R^n)\cap \R^m_{++} \ne \emptyset$.  This property can be equivalently
stated as $A(\R^n) + \R^m_{+} = \R^m$, that is, for all $b\in \R^m$ the system of linear inequalities
\[
Ax \le b
\]
is feasible.  In his seminal paper on condition measures for optimization~\cite{Rene95a}, Renegar
defined the {\em distance to infeasibility} of $A$ as the smallest perturbation that can be made on $A$ so that this
property is lost.  That is
\[
\Ren(A):=\inf\{\|\Delta A\|: (A+\Delta A)(\R^n)\cap \R^m_{++} = \emptyset\}.
\]
Renegar also defined $\|A\|/\Ren(A)$ as a condition number of $A$.

We have the following characterization of $\chi(A)$ in terms $\Ren(A)$ analogous to that in
Theorem~\ref{thm.main.strong}.
\begin{proposition}\label{prop.chi.ren}
Let $A\in \R^{m\times n}$ be a full column-rank matrix.  If $A(\R^n)\cap \R^m_{++}\ne \emptyset$ then $H(A) = 1/\Ren(A)$.  Consequently, if all rows of full column-rank matrix $A\in\R^{m\times n}$ are nonzero then
\begin{equation}\label{eq.chi.ren}
\chi(A) = \max_{S\in\S \atop SA(\R^n) \cap \R^m_{++} \ne \emptyset} \frac{1}{\Ren(SA)}.
\end{equation}
\end{proposition}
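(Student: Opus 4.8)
The plan is to prove the pointwise identity $H(A)=1/\Ren(A)$ (valid whenever $A$ is full column-rank and $A(\R^n)\cap\R^m_{++}\neq\emptyset$) and then read off~\eqref{eq.chi.ren} by substituting it into the sharpened characterization~\eqref{eq.thm.sharper} of Theorem~\ref{thm.main.strong}. Both pieces are short once the right dual descriptions of $H(A)$ and of $\Ren(A)$ are in hand.

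For the $H(A)$ side I would invoke~\eqref{eq.H.simple}: under the standing hypothesis $A(\R^n)\cap\R^m_{++}\neq\emptyset$ we have $H(A)=\max\{\|v\|:v\in\R^m_+,\ \|A\transp v\|=1\}$, and rescaling $v$ by its norm turns this into
\[
\frac{1}{H(A)}=\min_{v\in\R^m_+,\ \|v\|=1}\|A\transp v\|=:\rho .
\]
Here $\rho>0$: if $\rho=0$ there would be $v\in\R^m_+\setminus\{0\}$ with $A\transp v=0$, which by Gordan's theorem contradicts $A(\R^n)\cap\R^m_{++}\neq\emptyset$; in particular $A\transp v\neq 0$ for every $v\in\R^m_+\setminus\{0\}$, so the rescaling above is legitimate.

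For the $\Ren(A)$ side the key translation, again via Gordan's theorem (an exact dichotomy, hence an ``if and only if''), is that a matrix $M\in\R^{m\times n}$ satisfies $M(\R^n)\cap\R^m_{++}=\emptyset$ exactly when there is $v\in\R^m_+\setminus\{0\}$ with $M\transp v=0$; consequently $\Ren(A)=\inf\{\|\Delta A\|:\exists\,v\in\R^m_+,\ \|v\|=1,\ (A+\Delta A)\transp v=0\}$. One inequality is immediate: for any such $\Delta A$ and $v$ one has $\rho\le\|A\transp v\|=\|\Delta A\transp v\|\le\|\Delta A\|$, so $\Ren(A)\ge\rho$. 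For the reverse, pick $\hat v\in\R^m_+$ with $\|\hat v\|=1$ and $\|A\transp\hat v\|=\rho$ and take the rank-one matrix $\Delta A:=-\hat v\,(A\transp\hat v)\transp$; it has operator norm $\|\hat v\|\,\|A\transp\hat v\|=\rho$ and $(A+\Delta A)\transp\hat v=A\transp\hat v-(A\transp\hat v)\|\hat v\|^2=0$, so it is feasible for the infimum and $\Ren(A)\le\rho$. Therefore $\Ren(A)=\rho=1/H(A)$.

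It remains to deduce~\eqref{eq.chi.ren}. Assume all rows of the full column-rank matrix $A$ are nonzero. For every $S\in\S$ with $SA(\R^n)\cap\R^m_{++}\neq\emptyset$, the matrix $SA$ is full column-rank (because $S$ is invertible) and satisfies the hypothesis just treated, so $H(SA)=1/\Ren(SA)$. Plugging this into~\eqref{eq.thm.sharper} yields $\chi(A)=\max_{S\in\S,\ SA(\R^n)\cap\R^m_{++}\neq\emptyset}1/\Ren(SA)$, which is~\eqref{eq.chi.ren}. I expect the only delicate point to be the careful, two-directional use of Gordan's theorem (and the observation that it produces an equivalence, not merely an implication); the rank-one perturbation attaining the distance and the bound $\|\Delta A\transp v\|\le\|\Delta A\|\,\|v\|$ are routine.
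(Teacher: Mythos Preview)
Your proof is correct and follows the same route as the paper: both combine the dual formula~\eqref{eq.H.simple} for $H(A)$ with the dual characterization $\Ren(A)=\min_{v\in\R^m_+,\,\|v\|=1}\|A\transp v\|$, and then invoke~\eqref{eq.thm.sharper} from Theorem~\ref{thm.main.strong}. The only difference is that the paper imports this characterization of $\Ren(A)$ as a black box from~\cite[Theorem~3.5]{Rene95b} (recorded as~\eqref{eq.Ren}), whereas you derive it from scratch via Gordan's theorem and the standard rank-one perturbation $\Delta A=-\hat v(A\transp\hat v)\transp$; your argument is thus more self-contained but not a genuinely different approach.
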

\begin{proof}
When $A(\R^n)\cap \R^m_{++} \ne \emptyset$, the distance to ill-posedness $\Ren(A)$ has the following property similar
in spirit to Proposition~\ref{prop.chi} and Proposition~\ref{prop.H} (see\cite[Theorem 3.5]{Rene95b}):
\begin{equation}\label{eq.Ren}
\frac{1}{\Ren(A)} = \dmax_{v\in \R^m_+ \atop \|v\| =1} \|A\transp v\|.
\end{equation}
From~\eqref{eq.H.simple} and~\eqref{eq.Ren} it follows that $H(A) = 1/\Ren(A)$ when $A(\R^n)\cap \R^m_{++} \ne
\emptyset$.  The latter condition and~\eqref{eq.thm.sharper} in turn imply \eqref{eq.chi.ren} if all rows of $A$ are nonzero.
\end{proof}

Ameluxen and Burgisser~\cite{AmelB12} proposed a condition number via the Grassmannian manifold of linear subspaces of
$\R^m$ of some fixed dimension.  This condition number can be seen as a variant of Renegar's condition measure that
depends only on $A(\R^n)$ akin to the variants $\overline \chi(A)$ and $\overline H(A)$ of $\chi(A)$ and $H(A)$
respectively.  We next recall the description of the Grassmannian condition number proposed by Ameluxen and
Burgisser~\cite{AmelB12}. First,  define the {\em Grassmannian} distance $\dist(L,L')$ between two linear subspaces
$L,L' \subseteq \R^m$ of the same dimension  as
\[
\dist(L,L'):=\|\Pi_L - \Pi_{L'}\|,
\]
where $\Pi_L$ and $\Pi_{L'}$ denote the orthogonal projection matrices onto $L$ and $L'$ respectively.

Suppose $A\in \R^{m\times n}$  satisfies $A(\R^n)\cap \R^m_{++} \ne\emptyset$.  Let $L:=A(\R^n)$ and define the {\em
Grassmannian condition number} of $A$ as follows
\[
\Grass(A):=\frac{1}{\min\{\dist(L,L'): \dim(L') = \dim(L) \text{ and } L\cap \R^m_{++} = \emptyset\}}.
\]

Since $\Grass(A)$ depends only on $A(\R^n)$, it automatically satisfies the following invariance property just as
$\overline \chi(A)$ and $\overline H(A)$ do: For all non-singular $R\in \R^{m\times m}$
\begin{equation}\label{eq.grass.invar}
\Grass(AR) =   \Grass(A).
\end{equation}
The pair of quantities $1/\Ren(A), \Grass(A)$ are related to each other in the same way the pairs of quantities $\chi(A),\overline\chi(A)$ and $H(A),\overline H(A)$ are.  More precisely, we have the following analogue of Proposition~\ref{prop.chi.invar} and Proposition~\ref{prop.H.invar}. 

\begin{proposition}\label{prop.ren.grass.invar}
Suppose $A\in \R^{m\times n}$ is a nonzero matrix and $A(\R^n) \cap \R^m_{++}\ne \emptyset$. Then $\Grass(A) \le \|A\|/\Ren(A)$ and $\Grass(A) = 1/\Ren(A)$ when the non-zero columns of $A$ are orthonormal.  Consequently, if $A\in \R^{m\times n}$ is a nonzero matrix
\begin{equation}\label{eq.ren.grass.invar}
\Grass(A) = \min_{R \in \R^{m\times m} \atop \text{non-singular}} \frac{\|AR\|}{\Ren(AR)}.
\end{equation}
\end{proposition}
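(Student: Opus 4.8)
\emph{Proof proposal.} My plan is to follow the template of Propositions~\ref{prop.chi.invar} and~\ref{prop.H.invar}: first establish the inequality $\Grass(A) \le \|A\|/\Ren(A)$, then the equality $\Grass(A) = 1/\Ren(A)$ when the nonzero columns of $A$ are orthonormal, and finally deduce~\eqref{eq.ren.grass.invar} from these together with the invariance~\eqref{eq.grass.invar} of $\Grass$ under right multiplication by nonsingular matrices. Throughout put $L := A(\R^n)$ and write $\Pi_M$ for the orthogonal projection onto a subspace $M\subseteq\R^m$; recall that $\dist(M,M') = \|\Pi_M - \Pi_{M'}\| \le 1$ whenever $\dim M = \dim M'$, and that $(\dim L)$-dimensional subspaces disjoint from $\R^m_{++}$ exist (as $\dim L < m$ in the standing setup). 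For the first inequality, equivalently $\Ren(A) \le \|A\|\cdot\dist(L,L')$ for each \emph{valid} $L'$ (i.e. $\dim L'=\dim L$, $L'\cap\R^m_{++}=\emptyset$): if $\dist(L,L')=1$ this holds because $\Ren(A)\le\|A\|$ (take $\Delta A=-A$, so $(A+\Delta A)(\R^n)=\{0\}$); if $\dist(L,L')<1$, set $\Delta A := (\Pi_{L'}-\Id)A = (\Pi_{L'}-\Pi_L)A$ (using $A=\Pi_L A$), so $\|\Delta A\| \le \|(\Pi_{L'}-\Pi_L)\Pi_L\|\cdot\|A\| \le \dist(L,L')\|A\|$, and since $\Pi_{L'}$ is injective on $L$ when $\dist(L,L')<1$, the matrix $A+\Delta A = \Pi_{L'}A$ has range exactly $L'$, whence $(A+\Delta A)(\R^n)\cap\R^m_{++}=\emptyset$ and $\Ren(A)\le\|\Delta A\|$. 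Minimizing over $L'$ gives $\Grass(A)\le\|A\|/\Ren(A)$. (This inequality can also be quoted from~\cite{AmelB12}.)

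Next I would prove the equality when $A$ has nonzero orthonormal columns. First suppose \emph{all} columns of $A$ are nonzero and orthonormal, so $\|A\|=1$ and $\Pi_L = AA\transp$; the inequality above gives $\Grass(A)\le 1/\Ren(A)$, so it remains to exhibit valid subspaces arbitrarily close to $L$. From~\eqref{eq.H.simple} and $H(A)=1/\Ren(A)$ (Proposition~\ref{prop.chi.ren}) one gets $\Ren(A) = \min_{v\in\R^m_+,\,\|v\|=1}\|A\transp v\| = \min_{v\in\R^m_+,\,\|v\|=1}\|\Pi_L v\|$. Fix a unit $v\in\R^m_+$ with $v\notin L$ (if the minimizer lies in $L$ then $\Ren(A)=\|A\|=1$, and since $\Grass(A)\ge 1$ always, both sides equal $1$). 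Put $L_v := \Pi_{v^\perp}(L) = (\Pi_{v^\perp}A)(\R^n)$. Since $v\notin L$, $\Pi_{v^\perp}$ is injective on $L$, so $\dim L_v = \dim L$; since $L_v\subseteq v^\perp$ and $v\in\R^m_+\setminus\{0\}$, a theorem of the alternative (Gordan) gives $L_v\cap\R^m_{++}=\emptyset$, so $L_v$ is valid. The key computation is $\dist(L,L_v)=\|\Pi_L v\|$: with $p := \Pi_L v/\|\Pi_L v\|$, the subspaces $L$ and $L_v$ share the $(\dim L-1)$-dimensional subspace $L\cap p^\perp$ (which is orthogonal to $v$, hence fixed by $\Pi_{v^\perp}$) and their orthogonal remainders are $\mathrm{span}(p)$ and $\mathrm{span}(\Pi_{v^\perp}p) = \mathrm{span}(p-\|\Pi_L v\|\,v)$, two lines meeting at an angle $\theta$ with $\sin\theta = \|\Pi_L v\|$; thus $\|\Pi_L-\Pi_{L_v}\| = \sin\theta = \|\Pi_L v\|$. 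Taking the infimum over such $v$ yields $1/\Grass(A)\le\Ren(A)$, and with the first inequality, equality. For the case with some zero columns, let $\tilde A$ collect the nonzero (orthonormal) columns; then $\Grass(A)=\Grass(\tilde A)$ since $\Grass$ depends only on $A(\R^n)=\tilde A(\R^k)$, and $\Ren(A)=\Ren(\tilde A)$ (padding perturbations of $\tilde A$ with zero columns gives $\le$, and any $\Delta A=[\Delta_1\ \Delta_2]$ with $(A+\Delta A)(\R^n)\cap\R^m_{++}=\emptyset$ forces $(\tilde A+\Delta_1)(\R^k)\cap\R^m_{++}=\emptyset$, giving $\ge$); hence $\Grass(A)=1/\Ren(A)$.

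Finally, for~\eqref{eq.ren.grass.invar} with general nonzero $A$: if $A(\R^n)\cap\R^m_{++}=\emptyset$ then $L$ is itself valid, so $\Grass(A)=+\infty$, while $\Ren(AR)=0$ for every nonsingular $R$ since $AR(\R^n)=A(\R^n)$, so both sides are $+\infty$. Otherwise, \eqref{eq.grass.invar} and the first inequality give $\Grass(A)=\Grass(AR)\le\|AR\|/\Ren(AR)$ for every nonsingular $R\in\R^{n\times n}$, hence $\Grass(A)\le\min_R\|AR\|/\Ren(AR)$. To see the minimum is attained, take an orthonormal basis $u_1,\dots,u_r$ of $L$, pick $w_i$ with $Aw_i=u_i$, complete $w_1,\dots,w_r$ to a basis $w_1,\dots,w_n$ of $\R^n$ using a basis of $\ker A$, and set $R := [\,w_1\ \cdots\ w_n\,]$; then $R$ is nonsingular, $AR = [\,u_1\ \cdots\ u_r\ \ 0\ \cdots\ 0\,]$ has nonzero orthonormal columns, $\|AR\|=1$, and by the equality case $\|AR\|/\Ren(AR)=1/\Ren(AR)=\Grass(AR)=\Grass(A)$.

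I expect the main obstacle to be the equality step: in Propositions~\ref{prop.chi.invar} and~\ref{prop.H.invar} the orthonormal case was immediate from $\|Ax\|=\|x\|$, whereas here one must pass between perturbations of the matrix $A$ and perturbations of the subspace $A(\R^n)$, which forces the explicit tilted subspace $L_v$ and the single-principal-angle evaluation of $\dist(L,L_v)$; the boundary cases ($\dist(L,L')=1$, loss of rank under perturbation, zero columns, and $A(\R^n)\cap\R^m_{++}=\emptyset$) also require separate attention.
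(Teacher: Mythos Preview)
Your argument is correct, but it takes a substantially more detailed route than the paper's own proof. The paper simply quotes Theorems~1.3 and~1.4 of Amelunxen--B\"urgisser~\cite{AmelB12} for, respectively, the equality $\Grass(A)=1/\Ren(A)$ when the nonzero columns of $A$ are orthonormal and the inequality $\Grass(A)\le\|A\|/\Ren(A)$, and then observes that these two facts together with the invariance~\eqref{eq.grass.invar} yield~\eqref{eq.ren.grass.invar}. You instead give a self-contained argument: for the inequality you construct the explicit perturbation $\Delta A=(\Pi_{L'}-\Id)A$ and bound its norm by $\dist(L,L')\|A\|$; for the equality you build the tilted subspace $L_v=\Pi_{v^\perp}(L)$ and carry out the single-principal-angle computation $\dist(L,L_v)=\|\Pi_L v\|$, linking this to $\Ren(A)=\min_{v\in\R^m_+,\,\|v\|=1}\|\Pi_L v\|$ via~\eqref{eq.H.simple} and Proposition~\ref{prop.chi.ren}. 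You also handle the boundary cases ($\dist(L,L')=1$, zero columns, $A(\R^n)\cap\R^m_{++}=\emptyset$) that the paper's short proof does not spell out. You already note that the inequality ``can also be quoted from~\cite{AmelB12}''; the paper quotes both steps from there. What your approach buys is independence from the external reference and a proof in the same hands-on spirit as Propositions~\ref{prop.chi.invar} and~\ref{prop.H.invar}; what the paper's approach buys is brevity.
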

\begin{proof}
Suppose $A\in \R^{m\times n}$ and $A(\R^n) \cap \R^m_{++}\ne \emptyset$.  Then
the inequality $\Grass(A) \le 1/\Ren(A)$ follows from~\cite[Theorem 1.4]{AmelB12} and the identity $\Grass(A) = 1/\Ren(A)$ when the nonzero columns of $A$ are orthonormal follows from~\cite[Theorem 1.3]{AmelB12}.  The latter two facts and~\eqref{eq.grass.invar}
in turn imply~\eqref{eq.ren.grass.invar} when $A\in \R^{m\times n}$ is a nonzero matrix.
\end{proof}

We conclude with the following characterization of $\overline \chi(A)$ in terms $\Grass(A)$ analogous to that
in Corollary~\ref{cor}.  
\begin{corollary}\label{corol.chi.grass}
Suppose $A\in \R^{m\times n}$ is a full column-rank matrix and all rows of $A$ are nonzero. Then
\begin{equation}\label{eq.chi.grass}
\overline\chi(A) = \max_{S\in\S \atop SA(\R^n) \cap \R^m_{++} \ne \emptyset} \Grass(SA).
\end{equation}
\end{corollary}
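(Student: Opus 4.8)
The plan is to reduce~\eqref{eq.chi.grass} to the sharper identity in Corollary~\ref{cor} by way of the auxiliary fact that $\overline H(B) = \Grass(B)$ for every nonzero full column-rank matrix $B\in\R^{m\times n}$ with $B(\R^n)\cap\R^m_{++}\ne\emptyset$. Granting this fact, the corollary is immediate: since every row of $A$ is nonzero, so is every row of $SA$ for each $S\in\S$, hence whenever $SA(\R^n)\cap\R^m_{++}\ne\emptyset$ the matrix $SA$ is a nonzero full column-rank matrix meeting the hypothesis, and so $\overline H(SA)=\Grass(SA)$. Taking the maximum over all such $S$ and invoking the identity
\[
\overline\chi(A) = \max_{S\in\S \atop SA(\R^n) \cap \R^m_{++} \ne \emptyset} \overline H(SA)
\]
from Corollary~\ref{cor} then gives~\eqref{eq.chi.grass}.

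To prove the auxiliary fact, let $B$ be as described. Proposition~\ref{prop.H.invar} expresses $\overline H(B)$ as $\min_R \|BR\|\cdot H(BR)$ over non-singular $R$. For each such $R$ one has $BR(\R^n)=B(\R^n)$, so $BR(\R^n)\cap\R^m_{++}\ne\emptyset$; as $BR$ remains full column-rank, Proposition~\ref{prop.chi.ren} applies and yields $H(BR)=1/\Ren(BR)$. Substituting gives $\overline H(B)=\min_R \|BR\|/\Ren(BR)$, which is precisely $\Grass(B)$ by Proposition~\ref{prop.ren.grass.invar}.

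Essentially everything needed is already in place: the sharper form of Corollary~\ref{cor}, the two invariance identities over non-singular right factors from Propositions~\ref{prop.H.invar} and~\ref{prop.ren.grass.invar}, and the coincidence $H(\cdot)=1/\Ren(\cdot)$ in the always-feasible regime of Proposition~\ref{prop.chi.ren}. The only point deserving attention, and the closest thing to an obstacle, is checking that the feasibility condition $B(\R^n)\cap\R^m_{++}\ne\emptyset$ survives right multiplication by non-singular matrices, which holds because the column space is unchanged; this is what legitimizes applying Proposition~\ref{prop.chi.ren} to each factor $BR$ appearing in the minimum that defines $\overline H(B)$. I expect no deeper difficulty.
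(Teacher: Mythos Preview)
Your argument is correct. It is a minor variation on the paper's own one-line derivation: the paper combines Proposition~\ref{prop.chi.invar}, Proposition~\ref{prop.chi.ren}, and Proposition~\ref{prop.ren.grass.invar} directly (reducing to the orthonormal-column case, where $\overline\chi(A)=\chi(A)$ and $\Grass(SA)=1/\Ren(SA)$, and then invoking the $\chi$--Renegar identity~\eqref{eq.chi.ren}), whereas you route through Corollary~\ref{cor} and the intermediate fact $\overline H(B)=\Grass(B)$ in the feasible regime, proved by matching the minimizations in Proposition~\ref{prop.H.invar} and Proposition~\ref{prop.ren.grass.invar} term-by-term via $H(BR)=1/\Ren(BR)$. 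Both paths draw on the same circle of results; yours has the small bonus of isolating the clean identity $\overline H=\Grass$ under the hypothesis $B(\R^n)\cap\R^m_{++}\ne\emptyset$, which is of independent interest.
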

\begin{proof}
This is an immediate consequence of  Proposition~\ref{prop.chi.invar}, Proposition~\ref{prop.chi.ren}, and
Proposition~\ref{prop.ren.grass.invar}.
\end{proof}

\bibliographystyle{plain}

\end{document}